%------------------------------------------------------------------------------
% Beginning of journal.tex
%------------------------------------------------------------------------------
%
% AMS-LaTeX version 2 sample file for journals, based on amsart.cls.
%
%        ***     DO NOT USE THIS FILE AS A STARTER.      ***
%        ***  USE THE JOURNAL-SPECIFIC *.TEMPLATE FILE.  ***
%
% Replace amsart by the documentclass for the target journal, e.g., tran-l.
%
\documentclass{amsart}

\newtheorem{theorem}{Theorem}[section]
\newtheorem{lemma}[theorem]{Lemma}

\theoremstyle{definition}

\theoremstyle{remark}
\newtheorem{remark}[theorem]{Remark}

\theoremstyle{corollary}
\newtheorem{corollary}[theorem]{Corollary}

\numberwithin{equation}{section}

%    Absolute value notation

%    Blank box placeholder for figures (to avoid requiring any
%    particular graphics capabilities for printing this document).

\begin{document}

\title[Gradient Bounds on compact Manifolds]{Gradient Bounds and Liouville theorems for Quasi-linear equations on compact Manifolds with nonnegative Ricci curvature}

%    Information for first author
\author{Dimitrios Gazoulis}
%    Address of record for the research reported here
\address{Department of Mathematics, University of Athens (EKPA), Panepistemiopolis, 15784 Athens, Greece}
%    Current address
%\curraddr{Department of Mathematics and Statistics, Case Western Reserve University, Cleveland, Ohio 43403}
\email{dgazoulis@math.uoa.gr}
%    \thanks will become a 1st page footnote.
%\thanks{The first author was supported by
%in part by %NSF Grant \#000000.}
%``Basic research Financing'' under the National Recovery and Resilience Plan ``Greece 2.0'' funded by the European Union-NextGeneration EU (H.F.R.I. Project Number: 016097)}

%    Information for second author
\author{George Zacharopoulos}
\address{Department of Mathematics, University of Athens (EKPA), Panepistemiopolis, 15784 Athens, Greece}
\email{gzacharop@math.uoa.gr}
%\thanks{Support information for the second author.}

%    General info
%\subjclass[2020]{Primary 58J05 , 35J62 , 53C24 }

%\date{January 1, 2001 and, in revised form, June 22, 2001.}

%\dedicatory{This paper is dedicated to our advisors.}

%\keywords{Gradient Bounds, Quasi-Linear equations, $ P -$functions, Rigidity results, nonnegative Ricci curvature, compact manifolds}

\begin{abstract}
In this work we establish a gradient bound and Liouville-type theorems for solutions to Quasi-linear elliptic equations on compact Riemannian manifolds with nonnegative Ricci curvature. Also, we provide a local splitting theorem when the inequality in the gradient bound becomes equality at some point. Moreover, we prove a Harnack-type inequality and an ABP estimate for the gradient of solutions in domains contained in the manifold.
\end{abstract}

\maketitle

%\section*{This is an unnumbered first-level section head}
%This is an example of an unnumbered first-level heading.

%% The correct journal style for \specialsection is all uppercase; a known bug
%% in amsart.cls prevents this, so input must be uppercase until it is fixed.
%\specialsection*{This is a Special Section Head}
%\specialsection*{THIS IS A SPECIAL SECTION HEAD}
%This is an example of a special section head%
%%%%%%%%%%%%%%%%%%%%%%%%%%%%%%%%%%%%%%%%%%%%%%%%%%%%%%%%%%%%%%%%%%%%%%%%
%\footnote{Here is an example of a footnote. Notice that this footnote text is running on so that it can stand as an example of how a footnote with separate paragraphs should be written.
%\par
%And here is the beginning of the second paragraph.}%
%%%%%%%%%%%%%%%%%%%%%%%%%%%%%%%%%%%%%%%%%%%%%%%%%%%%%%%%%%%%%%%%%%%%%%%%

\section{Introduction}

Let $ ( \mathcal{M}, g) $ be a smooth Riemannian manifold. Throughout this paper we shall assume that $ (\mathcal{M}, g) $ or simply $ \mathcal{M} $ is a compact, connected, smooth and boundaryless Riemannian manifold of dimension $ n \geq 2 $ with nonnegative Ricci curvature. We will indicate otherwise if some of these assumptions are dropped. Also we denote by $\nabla$ the Levi-Civita connection with respect to the Riemannian metric $g$.

Consider the equation

\begin{equation}\label{GeneralQuasi-LinearEquation}
\begin{gathered}
div \left(  \Phi ' ( | \nabla u |^2) \nabla u \right) = F'(u)
\\ \textrm{where} \;\: u \: : \mathcal{M} \rightarrow \mathbb{R} \;\: \textrm{and} \;\: F \:, \: \Phi \: : \mathbb{R} \rightarrow \mathbb{R}
\end{gathered}
\end{equation}
This equation arise as the Euler-Lagrange of the energy functional
\begin{equation}\label{FunctionalForQuasi-LinearEq}
J(u) = \int_{\mathcal{M}} \left( \frac{1}{2} \Phi( |\nabla u|^2) + F(u) \right) d \mu_g
\end{equation}

In this paper we apply the ``$ P- $function technique'' to obtain a pointwise gradient estimate for the equation \eqref{GeneralQuasi-LinearEquation}. The method of proof is based on Maximum Principles and it has been introduced in \cite{Payne,PP,Sperb}. In addition, we establish Liouville-type theorems, one of which generalizes the constancy of bounded harmonic functions for Quasi-linear equations when a stability condition is assumed. Other applications are Harnack-type estimates and Alexandrov-Bekelman-Pucci type estimates for the gradient of solutions. In the last section, we also determine a local splitting result as an extension of \cite{FO} for Quasi-linear equations.

The idea of obtaining gradient bounds via the Maximum Principle turned out to be very effective and it found several applications in many topics including Riemannian geometry. To be more precise, relevant works can be found in \cite{CY,FO,FV,FV2,Hamilton,SZ} to cite a few. Furthermore, a novel approach to the Maximum Principle method has been recently exploited in a very successful way in \cite{Andrews,AC,AX}, in order to obtain oscillation and modulus of continuity estimates. Some of these estimates can provide gradient bounds, when the solutions are smooth enough. In particular, in \cite{AX}, they study a class of Quasi-Linear equations that include equations of the form \eqref{GeneralQuasi-LinearEquation} and obtain the gradient bound that appears in \cite{CGS} (see Theorem 1.6), for compact manifolds with non negative Ricci curvature (see Corollary 2). Theorem \ref{ThmGradientBoundForQuasi-LinearEq} in our work, is a special case of the bound in Corollary 2 in \cite{AX}, however our approach enable us to establish additional results, such as a weak Harnack inequality and an ABP estimate as mentioned previously.

Let now
\begin{equation}\label{PfunctionForeneralQuasi-LinearEquation}
P (u;x) = 2 \Phi'( | \nabla u(x) |^2) | \nabla u(x) |^2 - \Phi (| \nabla u(x) |^2) -2 F(u(x)) \;\;\;,\; x \in \mathcal{M}.
\end{equation}
The above quantity is the related $ P- $function of equation \eqref{GeneralQuasi-LinearEquation}. An abstract definition of $ P- $functions can be found in \cite{DG} and is similarly formulated for Riemannian manifolds. 
%$ \\ $

We assume that $ \Phi \in C^3( [0,+ \infty)) \;,\: F \geq 0 \;,\; F \in C^2 (\mathbb{R}) $ and $ \Phi(0) =0 $ and we define
\begin{equation}\label{Defa_ij=..}
a_{ij}(\sigma ) := 2 \Phi''(| \sigma |^2) \sigma_i \sigma_j + \Phi'(| \sigma |^2) \delta_{ij}
\end{equation}
and we suppose that one of the following conditions is satisfied: $ \\ $
\textbf{Assumption (A)} There exist $ p>1 \:,\; a \geq 0 $ and $ c_1,c_2 >0 $ such that for any $ \sigma , \: \xi \in \mathbb{R}^n \setminus \lbrace 0 \rbrace $,
\begin{equation}\label{AssumptionAona_ij1}
c_1(a + | \sigma |)^{p-2} \leq \Phi'( | \sigma |^2) \leq c_2 (a + | \sigma |)^{p-2} 
\end{equation} 
and
\begin{equation}\label{AssumptionAona_ij2}
c_1(a + | \sigma |)^{p-2} | \xi |^2 \leq \sum_{i,j=1}^n a_{ij}(\sigma ) \xi_i \xi_j \leq c_2 (a + | \sigma |)^{p-2} | \xi |^2
\end{equation}
$ \\ $
The structural assumptions \eqref{AssumptionAona_ij1} and \eqref{AssumptionAona_ij2} and considering $ p >1 $, allow us to apply regularity results from \cite{T}. So in assumption (A), we consider weak solutions of \eqref{GeneralQuasi-LinearEquation}, that is $ u \in W^{1,p}(\mathcal{M}) $ and $ p \in (1, + \infty) $.

$ \\ $
\textbf{Assumption (B)} There exist $ c_1, \: c_2 >0 $ such that for any $ \sigma \in \mathbb{R}^n $
\begin{equation}\label{AssumptionBona_ij1}
c_1(1 + | \sigma |)^{-1} \leq \Phi'( | \sigma |^2) \leq c_2 (1 + | \sigma |)^{-1} 
\end{equation} 
and
\begin{equation}\label{AssumptionBona_ij2}
c_1(1 + | \sigma |)^{-1} | \xi ' |^2 \leq \sum_{i,j=1}^n a_{ij}(\sigma ) \xi_i \xi_j \leq c_2 (1 + | \sigma |)^{-1} | \xi ' |^2
\end{equation}
for any $ \xi' = (\xi,\xi_{n+1}) \in \mathbb{R}^{n+1} $ which is orthogonal to $ ( - \sigma,1) \in \mathbb{R}^{n+1} . \\ \\ $
For assumption (B), we consider solutions $ u \in C^2(\mathcal{M}) . \\ $

The above assumptions (A) and (B) are classical, they agree for instance with the ones of \cite{CGS}, and examples of functional satisfying the above conditions are the Allen-Cahn equation, the $ p- $Laplacian (with $ p>1 $) and the mean curvature operators, which correspond to the cases
\begin{equation}\label{pLaplace}
\begin{gathered}
(i) \;\: \Phi(t) = t  \;\:,\; \Delta u = F'(u)  \\
\textrm{where} \;\: \Delta u = \frac{1}{\sqrt{det(g_{ij})}} \partial_k \left( \sqrt{det(g_{ij})} g^{kl} \partial_l u \right) ,
\end{gathered}
\end{equation}
\begin{equation}\label{AllenCahn}
\begin{gathered}
(ii) \;\: \Phi(t) = \frac{2}{p} t^{p/2} \;\:,\; \Delta_p u = F'(u)  \\
\textrm{where} \;\: \Delta_p u = \frac{1}{\sqrt{det(g_{ij})}} \partial_k \left( \sqrt{det(g_{ij})} g^{kl} \partial_l  (| \nabla u |^{p-2} \nabla u ) \right) ,
\end{gathered}
\end{equation}
\begin{equation}\label{MinimalSurface}
\begin{gathered}
(iii) \;\: \Phi(t) = 2 \sqrt{1+t} -2 \;\:,\; div ( \frac{\nabla u}{\sqrt{1+| \nabla u |^2}} ) = F'(u) \\
\textrm{where} \;\: div ( \frac{\nabla u}{\sqrt{1+| \nabla u |^2}} ) = \frac{1}{\sqrt{det(g_{ij})}} \partial_k \left( \sqrt{det(g_{ij})} g^{kl} \partial_l ( \frac{\nabla u}{\sqrt{1+| \nabla u |^2}} ) \right) ,
\end{gathered}
\end{equation}
written in local coordinates respectively.
$ \\ $

Note that, at first it seems that the specific choice $ \Phi(t) =  2 \sqrt{1+t} -2 $ fails to satisfy \eqref{AssumptionBona_ij2}. We have 
%\begin{align*}
%a_{ij}( \sigma) = - (1 + | \sigma |^2 )^{-\frac{3}{2}} \sigma_i \sigma_j + (1+ | \sigma |^2)^{-\frac{1}{2}} \delta_{ij}
%\end{align*}
%and by choosing $ \xi = \sigma $, this gives
\begin{align*}
\frac{1+ | \sigma |}{| \xi '|^2} \sum_{i,j=1}^n a_{ij}(\sigma ) \xi_i \xi_j = \frac{1+ | \sigma |}{(1+| \sigma |^2)^{\frac{5}{2}}} \;\;\;,\;\; \textrm{where} \;\: \xi' = (\sigma , | \sigma |^2).
\end{align*}
by choosing $ \xi = \sigma $. This quantity is not bounded below by a positive constant as in \eqref{AssumptionBona_ij2} by sending $ | \sigma | \rightarrow + \infty $. However, in our case the boundedness of the gradient is automatically verified, since the solutions are smooth and the manifold $ \mathcal{M} $ is compact. So, the constants $c_1 $ and $ c_2 $ in \eqref{AssumptionBona_ij2} will depend on the bound of the gradient but this does not affect Theorem \ref{ThmGradientBoundForQuasi-LinearEq} since it is a sharp pointwise gradient estimate, it affects only the estimates in Theorems \ref{ThmHarnackForGradient} and \ref{ThmABPestimateforGradient}. In the statement of these theorems we refer to
\eqref{ProofHarnackEqNewConstants}, \eqref{ProofHarnackEqNewConstantsAs(B)} for the definition of the constants in the estimates and their dependence on the bound of the gradient. In \cite{CGS} they assume in assumption (B) that there exist a constant $ C=C( || u||_{L^{\infty}}) $ such that $ | \nabla u | \leq C $.

In the case of assumption (A), by Theorem 1 in \cite{T} we can avoid this dependence, since $ | \nabla u | $ will be bounded by a constant that depend on $ || u||_{L^{\infty}} $.

In this work we prove the following gradient bound for solutions of \eqref{GeneralQuasi-LinearEquation}. $ \\ $

\begin{theorem}\label{ThmGradientBoundForQuasi-LinearEq}
Let $ \mathcal{M} $ be a smooth and compact Riemannian manifold with nonnegative Ricci curvature. Let $ u $ be a solution of
\begin{equation}\label{ThmQuasi-LinearEquation}
div \left(  \Phi ' ( | \nabla u |^2) \nabla u \right) = F'(u)
\end{equation}
such that $ F \in C^2( \mathbb{R} ; [0, + \infty)) $ and assume that either assumption (A) or (B) holds.

Then
\begin{equation}\label{ThmGradientBoundStatementEq}
2 \Phi'( | \nabla u(x) |^2) | \nabla u(x) |^2 - \Phi (| \nabla u(x) |^2) \leq 2 F(u(x)) 
\end{equation}
for any $ x \in \mathcal{M} $.
\end{theorem}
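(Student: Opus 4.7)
The conclusion \eqref{ThmGradientBoundStatementEq} is equivalent to $P \le 0$ on $\mathcal{M}$, where $P$ is the function in \eqref{PfunctionForeneralQuasi-LinearEquation}. Since $\mathcal{M}$ is compact and $P$ is continuous, the maximum $M := \max_{\mathcal{M}} P$ is attained at some point $x_0$. If $\nabla u(x_0) = 0$, then $\Phi(0) = 0$ together with $F \ge 0$ immediately gives $P(x_0) = -2F(u(x_0)) \le 0$, whence $M \le 0$ and we are done. The nontrivial case is $\nabla u(x_0) \ne 0$, and all further analysis takes place on the regular open set $\Omega := \{x \in \mathcal{M} : \nabla u(x) \ne 0\}$, on which the linearised operator $\mathcal{L}\psi := a_{ij}(\nabla u)\nabla_i\nabla_j\psi$ is locally uniformly elliptic thanks to \eqref{AssumptionAona_ij2} (or \eqref{AssumptionBona_ij2}).

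The core step is to establish on $\Omega$ a linear differential inequality of the form
\begin{equation*}
a_{ij}(\nabla u)\nabla_i\nabla_j P + \langle b, \nabla P\rangle \;\ge\; 0,
\end{equation*}
for a locally bounded vector field $b$. To obtain it I would differentiate $P$ twice, use the equation \eqref{ThmQuasi-LinearEquation} to eliminate the $a_{ij}(\nabla u)u_{ij}$ terms that appear, and apply a Bochner-type identity adapted to the weighted operator $\mathcal{L}$. The curvature contribution, essentially $\Phi'(|\nabla u|^2)\,\mathrm{Ric}(\nabla u,\nabla u)$, has the favourable sign by the hypothesis $\mathrm{Ric}\ge 0$; the remaining quadratic form in $\nabla^2 u$ is bounded below by $|\nabla|\nabla u||^2$ via a refined Kato-type inequality, in which one separates the component of $\nabla |\nabla u|$ along $\nabla u$ from its tangential part. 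The precise algebraic form of $P$ is tailored exactly so that, after these substitutions, the residual second-order terms combine nonnegatively while all first-order leftovers can be absorbed into $\langle b,\nabla P\rangle$.

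With the inequality in hand, the strong maximum principle on $\Omega$ forces $P \equiv M$ on the connected component $\Omega_0 \subset \Omega$ containing $x_0$. Compactness of $\mathcal{M}$ implies that $u$ has a critical point, hence $\Omega \subsetneq \mathcal{M}$ and, by connectedness of $\mathcal{M}$, $\partial\Omega_0 \ne \emptyset$. Since $\Omega_0$ is a connected component of the open set $\Omega$, every $y \in \partial\Omega_0$ lies in $\mathcal{M}\setminus\Omega$, i.e.\ satisfies $\nabla u(y) = 0$, so continuity of $P$ yields $M = P(y) = -2F(u(y)) \le 0$, finishing the proof. The main technical obstacle is the derivation of the elliptic inequality for $P$: one must track with care how the variable coefficients $\Phi'(|\nabla u|^2)$ and $\Phi''(|\nabla u|^2)$ interact when the quasi-linear equation is differentiated along $\nabla u$, and verify that the structural assumptions (A)/(B) make every cross-term carry the correct sign and remain bounded on compact subsets of $\Omega$, where the operator degenerates as $\nabla u \to 0$.
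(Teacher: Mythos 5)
Your proposal is correct and rests on the same core strategy as the paper: derive a degenerate elliptic inequality for $P$ (the paper's Lemma~\ref{PfunctionEllipticEstimateForGrBound}) and then invoke the strong maximum principle on the open set $\{\nabla u \neq 0\}$. You only sketch the derivation of that inequality; the paper does the full covariant computation, where the Ricci term arises from commuting third covariant derivatives and the quadratic Hessian term is handled by a plain Cauchy--Schwarz step rather than a named ``refined Kato'' inequality, but conceptually this matches what you describe. Where you genuinely diverge is the endgame. The paper, following Caffarelli--Garofalo--Segala, introduces the family $\mathcal{E}$ of solutions with $C^{2,\alpha}$-norm bounded by that of $u$, takes $P_0=\sup\{P(v;x):v\in\mathcal{E},\,x\in\mathcal{M}\}$, extracts a maximizing sequence, passes to a limit $(v_0,x_0)$ via Ascoli--Arzel\`a, and derives a contradiction at the global minimum point $y_0$ of $v_0$. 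That machinery is designed for the noncompact, entire setting of CGS and is unnecessary here: your version --- take the maximum of $P(u;\cdot)$ directly (attained by compactness), run the strong maximum principle on the component $\Omega_0$ of $\{\nabla u\neq 0\}$ containing the max point, and evaluate $P$ at a boundary point of $\Omega_0$, which exists since $u$ must have a critical point --- is simpler and, in one respect, more careful. The paper asserts $P(v_0;\cdot)\equiv P_0$ on \emph{all} of $\mathcal{M}$, but the strong maximum principle only gives this on $\Omega_0$ (the inequality degenerates where $\nabla v_0=0$); the conclusion that actually holds, namely $P\equiv P_0$ on $\overline{\Omega_0}$, is precisely what your boundary-point argument uses, and it still yields $P_0=-2F\le 0$. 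So your proof is valid, slightly streamlined, and patches a small imprecision in the paper's write-up.
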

$ \\ $
When $ \Phi' \equiv 1 $ in \eqref{GeneralQuasi-LinearEquation}, the respective $ P- $ function is $ P (u;x) = | \nabla u(x) |^2 - 2 F(u(x)) $ and the gradient bound becomes
\begin{align*} 
| \nabla u(x) |^2 \leq 2 F(u(x)) \;\;,\; \forall \: x \in \mathcal{M}
\end{align*}
%as we see in \cite{FV}
%and the respective $ P- $ function is
%\begin{align*}
%P (u;x) = | \nabla u(x) |^2 - 2 F(u(x)) 
%\end{align*}
This case is studied in \cite{FV}, where it is proved that $ P $ satisfies the following elliptic inequality
\begin{align*}
| \nabla u |^2 \Delta P - 2 F' \langle \nabla u , \nabla P \rangle \geq \frac{| \nabla P |^2 }{2} + 2 | \nabla u |^2 Ric( \nabla u, \nabla u)
\end{align*}
%as proved in \cite{FV}.
%(see (9) in p.3 in \cite{FV}).
%$ \\ $

For the proof of the Theorem \ref{ThmGradientBoundForQuasi-LinearEq} we follow both \cite{CGS} and \cite{FV}. 
%$ \\ $

Our second main result is a Liouville-type theorem for solutions of \eqref{GeneralQuasi-LinearEquation} when $ F'' \geq 0 $. This assumption on $ F $ guarantees stability for any solution, to be more precise, the second variation of the energy functional $ J(u) = \int_{\mathcal{M}} \frac{1}{2} \Phi( | \nabla u |^2) + F(u) $ is non negative. $ \\ $

\begin{theorem}\label{ThmLiouvilleForQuasiLinear}
Let $ \mathcal{M} $ be a smooth and compact Riemannian manifold with nonnegative Ricci curvature. Let $ u \in C^3(\mathcal{M}) $ be a solution of
\begin{equation}\label{ThmLiouvQuasi-LinearEquation}
div \left(  \Phi ' ( | \nabla u |^2) \nabla u \right) = F'(u)
\end{equation}
such that $ F \in C^2 $ with $ F'' \geq 0 $ and assume that either assumption (A) or (B) holds.

Then $ u $ is a constant.
\end{theorem}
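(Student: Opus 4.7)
The plan is to derive the Liouville property from a simple integration-by-parts identity that exploits the compactness and boundarylessness of $\mathcal{M}$; the hypothesis $F'' \geq 0$ will enter as a sign condition forcing a sum of nonnegative integrals to vanish.

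First I would multiply both sides of \eqref{ThmLiouvQuasi-LinearEquation} by $F'(u)$ and integrate over $\mathcal{M}$. Using the divergence theorem, which is legitimate because $\mathcal{M}$ is compact and without boundary, together with $\nabla F'(u) = F''(u)\nabla u$, this yields
\begin{equation*}
\int_{\mathcal{M}} (F'(u))^2 \, d\mu_g + \int_{\mathcal{M}} F''(u) \, \Phi'(|\nabla u|^2) \, |\nabla u|^2 \, d\mu_g = 0.
\end{equation*}
Both integrands are pointwise nonnegative: $(F'(u))^2 \geq 0$ trivially, while $\Phi'(|\nabla u|^2) \geq 0$ by Assumption (A) or (B) and $F''(u) \geq 0$ by hypothesis. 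Hence both integrands vanish identically on $\mathcal{M}$, and in particular $F'(u) \equiv 0$.

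With $F'(u) \equiv 0$, equation \eqref{ThmLiouvQuasi-LinearEquation} reduces to $div(\Phi'(|\nabla u|^2)\nabla u) = 0$. Testing this identity against $u$ and integrating by parts once more yields $\int_{\mathcal{M}} \Phi'(|\nabla u|^2) |\nabla u|^2 \, d\mu_g = 0$. Since $\Phi'(|\sigma|^2)|\sigma|^2 > 0$ whenever $|\sigma| > 0$ (immediate from the lower bounds in Assumption (A) or (B)), this forces $|\nabla u| \equiv 0$, and by connectedness of $\mathcal{M}$, $u$ is constant.

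I do not expect a substantive obstacle: the only points to check carefully are the two integration-by-parts steps, which are immediate from $u \in C^{2,\alpha}(\mathcal{M})$ and the absence of boundary. Notably, this argument uses neither the nonnegativity of the Ricci curvature nor the gradient bound of Theorem \ref{ThmGradientBoundForQuasi-LinearEq}; the statement is essentially an expression of the stability induced by $F'' \geq 0$ on a closed manifold, consistent with the remark that $F'' \geq 0$ makes the second variation of $J$ nonnegative.
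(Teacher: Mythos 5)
Your proof is correct, and it is a genuinely different and more elementary route than the paper's. The paper proceeds through Lemma~\ref{PfunctionEllipticEstimateForLiouville}, the differential inequality
\begin{equation*}
\sum_{i,j} \nabla_j\bigl(a_{ij}(\nabla u)\nabla_i P\bigr) \geq 2C_0 |Hes\, u|^2 + 2F''(u)|\nabla u|^2 + 2\Phi'(|\nabla u|^2)\,Ric(\nabla u,\nabla u)
\end{equation*}
for $P = |\nabla u|^2$, and then repeats the compactness-plus-strong-maximum-principle argument from the proof of Theorem~\ref{ThmGradientBoundForQuasi-LinearEq} (the class $\mathcal{E}$, Ascoli--Arzelà, a maximizing sequence for $P$) to force $P \equiv 0$; the nonnegativity of the Ricci curvature is needed there to make the right-hand side nonnegative. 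Your argument bypasses all of this: two integrations by parts on the closed manifold, first testing the equation against $F'(u)$ to get $\int (F'(u))^2 + \int F''(u)\Phi'(|\nabla u|^2)|\nabla u|^2 = 0$ (hence $F'(u) \equiv 0$), then testing the reduced equation against $u$ to get $\int \Phi'(|\nabla u|^2)|\nabla u|^2 = 0$ (hence $\nabla u \equiv 0$). This is cleaner, avoids the curvature hypothesis entirely, and so in fact proves a slightly stronger statement: on a compact boundaryless manifold the Liouville conclusion follows from $F'' \geq 0$ and the sign condition $\Phi' > 0$ alone, with no Ricci assumption. Your observation that the second step is needed — since $F'(u)\equiv 0$ together with $F''(u)\Phi'(|\nabla u|^2)|\nabla u|^2 \equiv 0$ does not by itself kill $\nabla u$ where $F''(u)=0$ — is the right point to flag, and you handled it correctly.
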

%$ \\ $

Note that if $ F $ convex and $ \Phi(t) = \frac{2}{p} t^{p/2} $, we obtain a Liouville-type result for the $ p- $Laplacian that generalizes the classical result that the only bounded harmonic functions on compact manifolds are the constant functions.
% (see for example Chapter 9 in \cite{Petersen}). 
%$ \\ $

Another Liouville-type theorem is the following

\begin{theorem}\label{ThmLiouvilletype2}
Let $ u $ be a solution of \eqref{GeneralQuasi-LinearEquation} and suppose assumptions of Theorem \ref{ThmGradientBoundForQuasi-LinearEq} are satisfied. In addition, if assumption (A) holds and $ p \geq 2 $, we require that $ F(u) = O(| u-u_0 |^p ) $ as $ u \rightarrow u_0 $ for every $ u_0 \in \mathbb{R} $ such that $ F(u_0) =0 $. If there exists $ x_0 \in \mathcal{M} $ such that $ F(u(x_0)) =0 $, then $ u $ is a constant in $ \mathcal{M} $.
\end{theorem}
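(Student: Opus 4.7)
The plan is to use Theorem \ref{ThmGradientBoundForQuasi-LinearEq} at the distinguished point $x_0$ to force equality in the gradient bound there, propagate that equality to all of $\mathcal{M}$ via a strong maximum principle, and finally upgrade the global equality to constancy of $u$ through the local splitting theorem. Set $h(s) := 2\Phi'(s)s - \Phi(s)$ for $s \geq 0$: since $\Phi(0)=0$ we have $h(0)=0$, and differentiation gives $h'(s) = 2\Phi''(s)s + \Phi'(s)$. This is exactly the value of the quadratic form $a_{ij}(\sigma)\xi_i\xi_j$ at $s=|\sigma|^2$ and $\xi=\sigma/|\sigma|$, so ellipticity via (A) or (B) yields $h'(s) > 0$ for $s>0$, and hence $h(s) > 0$ for $s > 0$. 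Evaluating Theorem \ref{ThmGradientBoundForQuasi-LinearEq} at $x_0$ then reads $h(|\nabla u(x_0)|^2) \leq 2F(u(x_0)) = 0$, which forces $\nabla u(x_0) = 0$ and consequently $P(x_0) = 0$.

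Since $P \leq 0$ on all of $\mathcal{M}$ by Theorem \ref{ThmGradientBoundForQuasi-LinearEq}, the point $x_0$ is a global maximum of $P$. I would then invoke the differential inequality for $P$ produced in the proof of Theorem \ref{ThmGradientBoundForQuasi-LinearEq}, a (possibly degenerate) elliptic inequality of the form $\mathcal{L} P \geq 0$ in which the nonnegativity of the Ricci curvature absorbs the Bochner-type curvature term. Applying the strong maximum principle at $x_0$ allows me to conclude $P \equiv 0$ on $\mathcal{M}$, so equality in the gradient bound holds pointwise.

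To finish, I would appeal to the local splitting theorem announced in the abstract: at any point where $\nabla u \neq 0$, the identity $h(|\nabla u|^2) = 2F(u)$ forces a local Riemannian product decomposition of $\mathcal{M}$ in which $u$ depends only on the splitting coordinate and therefore satisfies an autonomous first-order ODE along its gradient flow. Such a strictly monotone one-dimensional profile cannot close up on the compact connected manifold $\mathcal{M}$, so the open set $\{\nabla u \neq 0\}$ must be empty and $u$ is constant by connectedness. The main obstacle is the second step: the operator $\mathcal{L}$ degenerates exactly on $\{\nabla u = 0\}$, which contains the maximum point $x_0$ itself, so propagating $P \equiv 0$ across this degenerate locus will require either a Hopf-type boundary lemma tailored to degenerate quasilinear operators, or a bootstrap in which one first applies the strong maximum principle on the open set $\{\nabla u \neq 0\}$ and then uses the splitting theorem to show that $\{P=0\}$ is both open and closed in $\mathcal{M}$.
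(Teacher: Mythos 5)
Your first two steps are sound: using that $h(s) := 2\Phi'(s)s - \Phi(s)$ satisfies $h(0)=0$ and $h' = \Lambda > 0$ by ellipticity, the hypothesis $F(u(x_0))=0$ together with the gradient bound forces $\nabla u(x_0)=0$ and hence $P(u;x_0)=0$. But the central step of your argument — propagating $P\equiv 0$ by the strong maximum principle — has a genuine, unresolved gap, which you yourself flag at the end. The differential inequality of Lemma~\ref{PfunctionEllipticEstimateForGrBound} carries a factor $|\nabla u|^2$ in front of the second-order part, so at $x_0$ (precisely the point where $P$ attains its maximum) the operator collapses to $0\geq 0$ and gives no information. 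Notice that in the proof of Theorem~\ref{ThmGradientBoundForQuasi-LinearEq} the authors go out of their way to establish $\nabla v_0(x_0)\neq 0$ (eq.~\eqref{ProofTheorem1.1Eq9}) precisely so that the strong maximum principle is applicable; in your setting the maximum is \emph{forced} to occur where $\nabla u = 0$, so that route is blocked. Neither of your proposed repairs closes the gap: a Hopf-type lemma for this degenerate operator is not available, and the ``bootstrap'' fails because you have no information that $P=0$ anywhere in $\{\nabla u\neq 0\}$ — the only point where $P$ is known to vanish is $x_0$, which lies outside that set, so the maximum principle on $\{\nabla u\neq 0\}$ has nothing to propagate. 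Your final step is also under-justified: even granting $P\equiv 0$, the local splitting theorem gives a product structure only in a neighborhood of a \emph{regular} point, and turning ``locally monotone one-dimensional'' into a global contradiction with compactness needs an argument you do not supply.

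The paper takes a completely different and more elementary route, following Theorem~1.8 of \cite{CGS}: from $\nabla u(x_0)=0$ and $F(u(x_0))=0$ (so $u(x_0)$ is a minimum of $F$, whence $F'(u(x_0))=0$ and $F$ decays at least quadratically near $u(x_0)$), the gradient bound $h(|\nabla u|^2)\leq 2F(u)$ yields a pointwise estimate of the form $|\nabla u|\lesssim |u-u(x_0)|$ near $x_0$. One then sets $\phi(t) := u(\exp_{x_1}(tv)) - u(x_0)$ along geodesics and runs a Gronwall-type ODE comparison to conclude $\phi\equiv 0$, hence $u\equiv u(x_0)$ by connectedness. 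This sidesteps the maximum principle entirely and in particular has no issue with degeneracy at $x_0$. If you want to pursue your own strategy, you would need to replace step 3 by this ODE argument; as written, the proposal does not constitute a proof.
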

%$ \\ $

Also, we establish some gradient estimates for the solutions of \eqref{GeneralQuasi-LinearEquation}. We denote as $ B_R $ a ball of radius $ R $ and center any given point in $ \mathcal{M} $. For Theorems \ref{ThmHarnackForGradient} and \ref{ThmABPestimateforGradient} below, we require that the sectional curvature of $ \mathcal{M} $ is nonnegative in order to apply the results in \cite{Cabre}. First, a Harnack inequality for the gradient of solutions
%Note that for proving this Theorem we do not assume that the manifold is compact.

\begin{theorem}\label{ThmHarnackForGradient}
Let $ \mathcal{M} $ be a smooth and compact Riemannian manifold with nonnegative sectional curvature and $ u \in C^3 ( \mathcal{M}) $ be a solution of
\begin{equation}\label{ThmLiouvQuasi-LinearEquation}
div \left(  \Phi ' ( | \nabla u |^2) \nabla u \right) = F'(u)
\end{equation}
such that $ F'' \leq 0 $. Assume that either: (i) assumption (A) holds and $ a>0 $ when $ p \neq 2 $ or (ii) assumption (B) holds.

Then
\begin{equation}\label{thmHarnackInequalityStatement}
\begin{gathered}
\frac{1}{| B_R |^{1/q}} \left( \int_{B_R} | \nabla u |^{2q} d \mu_g \right)^{1/q} \leq C ( \inf_{B_R} | \nabla u |^2 + \frac{R^2}{| B_{2R} |^{1/n}} || \: |Hes \: u|^2 \: ||_{L^{n}(B_{2R})}  \\  + \frac{R^2}{| B_{2R} |^{1/n}} || \Phi'(| \nabla u |^2 ) Ric(\nabla u, \nabla u) ) ||_{L^{n}(B_{2R})} )
\end{gathered}
\end{equation}
where $ q>0 $ and $ C $ depends on $ n \;, \tilde{c}_1 $ and $ \tilde{c}_2 $ defined either in \eqref{ProofHarnackEqNewConstants} or \eqref{ProofHarnackEqNewConstantsAs(B)} respectively.
\end{theorem}

%$ \\ $

Additionally, an Alexandrov-Bekelman-Pucci type estimate (ABP estimate) for the gradient of solutions is obtained. We  assume the following property for a given bounded domain (bounded, open and connected set) $ \Omega \subset \mathcal{M} $:
\begin{equation}\label{Gpropertyfordomains}
\begin{gathered}
\textrm{Given} \;\: R>0 \;\: \textrm{and} \;\: \theta \in (0,1), \;\: \textrm{it holds} \;\:
| B_R(x) \setminus \Omega | \geq \theta | B_R(x) | \;\;\;\; \forall x \in \Omega
\end{gathered}
\end{equation}
where $ B_R(x) $ is a ball of radius $ R $ and center $ x \in \mathcal{M} . \\ $

\begin{theorem}\label{ThmABPestimateforGradient}
Let $ \Omega \subset \mathcal{M} $ be a bounded domain and assume that \eqref{Gpropertyfordomains} holds for some constants $ R >0 $ and $ \theta \in (0,1) $ and $ \mathcal{M} $ has nonnegative sectional curvature. Let $ u \in C^3( \mathcal{M}) $ be a solution of \eqref{GeneralQuasi-LinearEquation} that satisfy $ \limsup_{x \rightarrow \partial \Omega} | \nabla u | = 0 $.

Then, for some $ z_0 \in \overline{\Omega} $,
\begin{equation}\label{ThmAbPestimateStatement}
\sup_{\Omega} | \nabla u |^2 \leq C_{\theta} \frac{R^2}{| B_{2R}(z_0) |^{1/n}} || F''(u) | \nabla u |^2 ||_{L^n(\Omega \cap B_{2R}(z_0))}
\end{equation}
where $ C_\theta $ is a constant depending on $ n \;, \tilde{c}_1 \;, 
\tilde{c}_2 $ and $ \theta $.
\end{theorem}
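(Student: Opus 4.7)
The plan is a linearization-plus-ABP argument. Since $u \in C^3(\mathcal{M})$ and $\overline{\Omega}$ is compact, the function $v := |\nabla u|^2$ attains its supremum over $\overline{\Omega}$; the boundary hypothesis $\limsup_{x\to\partial\Omega}|\nabla u|=0$ forces this supremum to be attained at an interior point $z_0 \in \Omega$ (unless $v \equiv 0$, in which case there is nothing to prove). I keep in the background Assumption (A) or (B), which makes the linearized operator introduced below uniformly elliptic on $\overline{\Omega}$ with constants depending on $\sup_{\overline{\Omega}}|\nabla u|$; the latter is finite by regularity.

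The core step is to derive a pointwise differential inequality for $v$. Writing \eqref{GeneralQuasi-LinearEquation} in non-divergence form $a_{ij}(\nabla u)\nabla_i\nabla_j u = F'(u)$ with $a_{ij}$ as in \eqref{Defa_ij=..}, differentiating in the direction $e_k$, contracting with $\nabla^k u$ and summing, one arrives at an identity of the schematic form
\[
\tfrac{1}{2}\,a_{ij}(\nabla u)\,\nabla_i\nabla_j v \;=\; F''(u)\,|\nabla u|^2 \;+\; a_{ij}(\nabla u)\,\nabla_i\nabla_k u\,\nabla_j\nabla^k u \;+\; \mathcal{R} \;-\; \tfrac{1}{2}\,\langle b,\nabla v\rangle,
\]
where the key elementary step uses $\nabla^k u\,\nabla_i\nabla_j\nabla_k u = \tfrac{1}{2}\nabla_i\nabla_j v - \nabla_i\nabla_k u\,\nabla_j\nabla^k u$, the vector field $b^m := (\partial_{\sigma_m}a_{ij})(\nabla u)\,\nabla_i\nabla_j u$ is bounded on $\overline{\Omega}$, and $\mathcal{R}$ is a curvature contribution produced by commuting covariant derivatives on the manifold. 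The second term on the right is the trace of the product of two positive (semi)definite symmetric matrices, hence nonnegative, while $\mathcal{R}$ is nonnegative under the assumption $\mathrm{Ric}\geq 0$ combined with the ellipticity of $(a_{ij})$. Consequently
\[
\mathcal{L} v \;+\; \langle b,\nabla v\rangle \;\geq\; 2\,F''(u)\,|\nabla u|^2 \;\geq\; -\,2\,|F''(u)|\,|\nabla u|^2 \qquad\text{in }\Omega,
\]
where $\mathcal{L} := a_{ij}(\nabla u)\nabla_i\nabla_j$ is uniformly elliptic on $\overline{\Omega}$.

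The final step is to apply the Cabr\'e-type ABP estimate for uniformly elliptic non-divergence operators on Riemannian manifolds with nonnegative Ricci curvature; that estimate uses precisely the geometric property \eqref{Gpropertyfordomains} and is the source of the prefactor $R^2/|B_{2R}(z_0)|^{1/n}$. Apply it to $w:=-v$, which is nonpositive in $\Omega$ and vanishes on $\partial\Omega$ by the boundary hypothesis on $|\nabla u|$: the inequality of the previous step rewrites as $\mathcal{L} w + \langle b,\nabla w\rangle \leq 2|F''(u)||\nabla u|^2$, and the ABP conclusion provides, for some $z_0 \in \overline{\Omega}$,
\[
\sup_{\Omega} v \;=\; \sup_{\Omega}(-w) \;\leq\; C_\theta\,\frac{R^2}{|B_{2R}(z_0)|^{1/n}}\,\bigl\|F''(u)\,|\nabla u|^2\bigr\|_{L^n(\Omega\cap B_{2R}(z_0))},
\]
which is the claim.

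I expect the main obstacle to be the pointwise identity of the second step: one has to carefully track every contribution produced by differentiating the nonlinear diffusion $\Phi'(|\nabla u|^2)\nabla u$---in particular the cubic term with $\partial_\sigma a_{ij}$, which must be collected into the bounded first-order correction $\langle b,\nabla v\rangle$ that the ABP estimate can accommodate---and to verify that the curvature term $\mathcal{R}$ arising when commuting covariant derivatives has the correct sign under the geometric hypothesis $\mathrm{Ric}\geq 0$ together with the ellipticity of $a_{ij}(\nabla u)$. Once this identity is secured, the other two steps are essentially routine.
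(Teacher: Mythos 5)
Your proposal is correct and follows essentially the same route as the paper: derive a second-order elliptic inequality for $|\nabla u|^2$ (the paper does this as Lemma~\ref{PfunctionEllipticEstimateForLiouville}, in divergence form; your version in non-divergence form plus a bounded drift is equivalent after expanding $\nabla_j(a_{ij}\nabla_i P)$), then invoke Cabr\'e's ABP estimate on manifolds with nonnegative Ricci curvature (Theorem 2.3 of \cite{Cabre}), which supplies the prefactor $R^2/|B_{2R}(z_0)|^{1/n}$ under property~\eqref{Gpropertyfordomains}. Your discussion of the sign of the Hessian and curvature terms and the boundary condition on $|\nabla u|$ matches what the paper uses implicitly.
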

%$ \\ $

The constants $ \tilde{c}_1 \;, \tilde{c}_2 $ in Theorem \ref{ThmABPestimateforGradient} are defined in \eqref{ProofHarnackEqNewConstants} if assumption (A) holds or in \eqref{ProofHarnackEqNewConstantsAs(B)} if assumption (B) holds respectively.
$ \\ $
\begin{remark} Condition \eqref{Gpropertyfordomains} is satisfied by ``narrow'' domains, in the sense that if $ | \Omega| $ is small and set $ R:= \left( \dfrac{2| \Omega|}{\inf_{y\in \mathcal{M}} |B_1(y)|} \right)^{1/n} \leq 1 $, then $ \forall \: x \in \Omega $, it holds $ | B_R(x) \setminus \Omega | \geq 1/2 | B_R(x)| $ (for further details see Remark 8.4 in \cite{Cabre}).

Another example is the following: let $ B_R(x) $ is such that $ \Omega \subset B_R(x) \subset \mathcal{M} $, and 
$$ |\Omega| \leq \delta | B_R(x)| \:,\; \delta \in (0,1) \;,\: \textrm{then} \;\: | B_R(x) \setminus \Omega | \geq (1 - \delta) | B_R(x) |\;\: \textrm{and} \; \theta = 1 - \delta \in (0,1). $$
\end{remark}
%$ \\ $

Finally, in the last section we will prove that the existence of a nonconstant bounded solution $ u $ for which the gradient bound \eqref{ThmGradientBoundStatementEq} becomes equality at some point $ x_0 \in \mathcal{M} $, leads to a local splitting theorem as well as to a classification of such solution $ u $. This result is motivated by the work in \cite{FO}, in which they proved local and global splitting theorems for the Allen-Cahn equations when the equipartition of the energy holds at some point. 

\begin{remark}\label{RmkExtensionNonCompactCase}
In the case of complete noncompact manifolds one of the main technical difficulties to extend Theorem \ref{ThmGradientBoundForQuasi-LinearEq} is in general the lack of group addition property (the case of $ \mathbb{R}^n $ has been done in \cite{CGS}). Another technical difficulty is to ensure that $ \inf_{\mathcal{M}} | \nabla u| =0 $ holds. Also, in \cite{FV2} Theorem \ref{ThmGradientBoundForQuasi-LinearEq} has been proved for possibly unbounded domains in $ \mathbb{R}^n $ with nonnegative mean curvature. Theorems \ref{ThmLiouvilleForQuasiLinear}, \ref{ThmLiouvilletype2} and \ref{LocalSplittingTheorem} can be derived if Theorem \ref{ThmGradientBoundForQuasi-LinearEq} holds in the noncompact case. In that case, Theorem \ref{LocalSplittingTheorem} can be extended to a global splitting theorem by Cheeger-Gromoll splitting theorem for complete noncompact manifolds (see \cite{FO}).
\end{remark}
%We extend some of these results for Quasi-linear equations on compact manifolds.
%$ \\ $

\section{Proof of Theorem \ref{ThmGradientBoundForQuasi-LinearEq}}

First we prove that $ P $ defined in \eqref{PfunctionForeneralQuasi-LinearEquation} is a $ P $ function of \eqref{GeneralQuasi-LinearEquation}, following the proof of Theorem 2.2 in \cite{CGS}.

\begin{lemma}\label{PfunctionEllipticEstimateForGrBound}
Consider $ \Omega \subset \mathcal{M} $ be a connected open set and let $ u $ be a solution of \eqref{GeneralQuasi-LinearEquation} such that $ \inf_{\overline{\Omega}}| \nabla u | >0 $. Assume that either assumption (A) or (B) holds.

Then
\begin{equation}\label{EllipticDifIneqforP}
| \nabla u |^2 \sum_{i,j} \nabla_j (d_{ij}(\nabla u)\nabla_i P) + \sum_{i}  B_i \nabla_i P \geq \frac{| \nabla P |^2}{2 \Lambda(| \nabla u|^2 )} + 2 | \nabla u |^2 \Phi'(| \nabla u |^2 )  Ric(\nabla u, \nabla u)
\end{equation}
for any $ x \in \Omega $, where
\begin{align*}
d_{ij}( \nabla u) = \dfrac{a_{ij}(\nabla u)}{\Lambda(| \nabla u |^2)} \;,\: \Lambda( | \nabla u |^2) = 2 \Phi''( | \nabla u |^2 ) | \nabla u |^2 + \Phi'(| \nabla u |^2) \;, \\ B_i = - \frac{2F'(u)}{\Lambda( | \nabla u |^2)} \nabla_i u -  \frac{2\Phi''(| \nabla u |^2 ) | \nabla u |^2 F'(u)}{\Lambda ( | \nabla u |^2) \Phi'(| \nabla u |^2)} \nabla_i u \;\;\;\;\;\;\;\;\;\;\;
\end{align*}
and $ P $ defined in \eqref{PfunctionForeneralQuasi-LinearEquation}.
\end{lemma}
$ \\ $
The left hand side of \eqref{EllipticDifIneqforP} is a uniformly elliptic operator since $ | \nabla u | $ is bounded and $ \inf_{\overline{\Omega}}| \nabla u | >0 $.
So, a direct consequence of Lemma \ref{PfunctionEllipticEstimateForGrBound} and the Strong Maximum Principle (see Theorem 8.19 in \cite{GT}), is the following.

\begin{corollary}\label{CorMaximumPrincipleforP} Suppose the assumptions of Lemma \ref{PfunctionEllipticEstimateForGrBound}. If there exists $ x_0 \in \Omega $ such that
\begin{equation}\label{MaximumPrincipleforPEq}
P(u; x_0) = \sup_{\Omega} P(u;x)
\end{equation}
then $ P $ is constant in $ \Omega $.
\end{corollary}
%$ \\ $
\begin{proof}[Proof of Lemma \ref{PfunctionEllipticEstimateForGrBound}]
To begin with, consider the case where assumption (A) holds. By Theorem 1 in \cite{T} (see also Theorem 3.1 in \cite{CGS}) we have that $ u \in C^{1,\alpha} $ and the compactness of $ \mathcal{M} $ in addition imply that $ \nabla u $ is bounded. Since $ \inf_{\overline{\Omega}}| \nabla u | >0 $, the equation \eqref{GeneralQuasi-LinearEquation} is uniformly elliptic and thus by Theorem 6.3, Chapter 4 in \cite{LU} we obtain that $ u \in C^{2, \alpha} ( \Omega) $.

The uniform ellipticity of equation \eqref{GeneralQuasi-LinearEquation} is similarly true when assumption (B) holds, utilizing also the fact that $ | \nabla u | $ is bounded (since $ \mathcal{M} $ is compact). So again, we conclude that $ u \in C^{2, \alpha} ( \Omega) $.

Throughout this proof we will use abstract index notation for the calculations. We consider the function  \eqref{PfunctionForeneralQuasi-LinearEquation} and take it's covariant derivative. 
\begin{equation}\label{GradofPfunction}
\begin{gathered}
\nabla_i P = \nabla_i (2 \Phi' (|\nabla u|^2)|\nabla u|^2 - \Phi (|\nabla u|^2) -2 F(u)  )
\\ =  4 \Phi'' ( | \nabla u |^2 ) g(\nabla_i \nabla u, \nabla u) |\nabla u|^2+ 4 \Phi' (|\nabla u|^2)g (\nabla_i \nabla u, \nabla u) \\ -2\Phi' (|\nabla u|^2) g(\nabla_i \nabla u, \nabla u) -2 F' (u) \nabla_i u \\
= 4 \Phi'' (|\nabla u|^2) |\nabla u|^2 g(\nabla_i \nabla u, \nabla u) + 2 \Phi' (|\nabla u|^2) g(\nabla_i \nabla u, \nabla u)-2 f(u)\nabla_i u
\end{gathered}
\end{equation}
where $F' =f$.
By denoting
\begin{equation}\label{LamdaQuantity}
\Lambda (|\nabla u|^2) = 2 \Phi'' (|\nabla u|^2)|\nabla u|^2 + \Phi' (|\nabla u|^2)
\end{equation}
we get
\begin{equation}\label{derivativeofP}
\nabla_i P = 2 \Lambda (|\nabla u|^2) g(\nabla_i \nabla u, \nabla u) -2 f(u) \nabla_i u
\end{equation}
Next we will multiply $\nabla_i P$ with $$d_{ij} (\nabla u) = \frac{a_{ij} (\nabla u)}{\Lambda (|\nabla u|^2)} $$ and take the covariant derivative of $d_{ij} (\nabla u)\nabla_i P$. 
\begin{align} \label{covariantderivativedP}
\begin{split}
\nabla_j (d_{ij}(\nabla u) \nabla_i P  ) = & \nabla_j (2 a_{ij} (\nabla u) g (\nabla_i \nabla u, \nabla u) - 2f(u) d_{ij} (\nabla u) \nabla_i u  ) \\
= & \nabla_j (2 a_{ij} (\nabla u)\nabla_i \nabla_k u  )\nabla_k u + 2 a_{ij} (\nabla u) \nabla_i \nabla_k u \nabla_j \nabla_k u \\
& - \nabla_j (2 d_{ij} (\nabla u) \nabla_i u )f(u) -2 f' (u)  d_{ij}(\nabla u) \nabla_ju \nabla_i u 
\end{split}
\end{align}

We note that the left hand side of \eqref{covariantderivativedP} should be a priori expressed in the sense of distributions, however, the $ C^{2, \alpha} $ regularity of the solution guarantees that $ \nabla_j (d_{ij}(\nabla u) \nabla_i P  ) $ is a continuous function (see equation \eqref{ContinuousRightHandSide}).

First we will compute $\nabla_j (d_{ij}  (\nabla u) \nabla_i u)$. Using \eqref{ThmQuasi-LinearEquation} we get
\begin{equation}\label{beforeclaim}
\begin{gathered}
\nabla_j (d_{ij}  (\nabla u) \nabla_i u) =  \nabla_j d_{ij} (\nabla u)\nabla_i u + d_{ij} (\nabla u) \nabla_j \nabla_i u \\
\ \ \ \ \ \ \ \ \ \ \ \ \ \ \ \ =  \nabla_j d_{ij}(\nabla u) \nabla_i u + \frac{f(u)}{\Lambda (|\nabla u|^2)}
\end{gathered}
\end{equation}
Claim: The following identity holds 
\begin{align*}
\nabla_j d_{ij} (\nabla u) \nabla_i u = \frac{2 \Phi''(|\nabla u|^2) }{\Lambda (|\nabla u|^2)} ( |\nabla u|^2 \Delta u - g(\nabla_i \nabla u, \nabla u)\nabla_i u )
\end{align*}
Proof of the claim:

We have 
\begin{align*}
\nabla_j d_{ij}(\nabla u) \nabla_i u = \frac{\nabla_j (a_{ij} (\nabla u)) \Lambda (|\nabla u|^2) \nabla_i u - a_{ij}(\nabla u) \nabla_j (\Lambda (|\nabla u|^2))\nabla_i u}{\Lambda (|\nabla u|^2)^2}  
\end{align*}
The numerator equals to 
\begin{equation*}
\begin{gathered}
\nabla_j (a_{ij} (\nabla u)) \Lambda (|\nabla u|^2) \nabla_i u -  a_{ij}(\nabla u) \nabla_j (\Lambda (|\nabla u|^2))\nabla_i u \\ = 8\Phi''' (|\nabla u|^2)g(\nabla_j \nabla u, \nabla u)|\nabla u|^4 \nabla_j u \Phi''(|\nabla u|^2) + 4 \Phi'' (|\nabla u|^2)g(\nabla_j \nabla u, \nabla u) |\nabla u|^2 \nabla_j u \Phi' (|\nabla u|^2) \\
+ 4 \Phi'' (|\nabla u|^2) \nabla_j \nabla_i u \nabla_j u \nabla_i u |\nabla u|^2 \Phi'' (|\nabla u|^2)
+2 \Phi'' (|\nabla u|^2)\nabla_j \nabla_i u \nabla_j u \nabla_i u \Phi' (|\nabla u|^2) \\ + 4 \Phi'' (|\nabla u|^2) \nabla_i u \nabla_j u \Delta u |\nabla u|^2 \Phi'' (|\nabla u|^2)
+ 2 \Phi'' (|\nabla u|^2)\nabla_i u \nabla_i u \Delta u \Phi' (|\nabla u|^2) \\
+4 \Phi'' (|\nabla u|^2) g(\nabla_j \nabla u, \nabla u) \nabla_j u |\nabla u|^2 \Phi'' (|\nabla u|^2)
 + 2 \Phi'' (|\nabla u|^2) g(\nabla_j \nabla u, \nabla u) \nabla_j u \Phi' (|\nabla u|^2) \\
 -12 g(\nabla_j \nabla u, \nabla u)\Phi'' (|\nabla u|^2)\nabla_i u \Phi'' (|\nabla u|^2) \nabla_i u \nabla_j u - 6 g(\nabla_i \nabla u, \nabla u) \Phi'' (|\nabla u|^2) \nabla_i u \Phi' (|\nabla u|^2) \\
 -8 g(\nabla_ j \nabla u, \nabla u) |\nabla u|^4 \Phi'''(|\nabla u|^2) \nabla_j u \Phi'' (|\nabla u|^2) 
 - 4 g(\nabla_i \nabla u, \nabla u) |\nabla u|^2 \Phi'' (|\nabla u|^2) \nabla_i u \Phi' (|\nabla u|^2)
\end{gathered}
\end{equation*}
after we cancel some terms we obtain.
\begin{equation}\label{ClaimProofEq}
\begin{gathered}
\nabla_j (a_{ij} (\nabla u)) \Lambda (|\nabla u|^2) \nabla_i u - a_{ij}(\nabla u) \nabla_j (\Lambda (|\nabla u|^2))\nabla_i u= 4 \Phi'' (|\nabla u|^2)^2\nabla_i u \nabla_i u \Delta u |\nabla u|^2 \\
+ 2 \Phi'' (|\nabla u|^2) \Phi' (|\nabla u|^2)\nabla_i \nabla_i u \Delta u  
 - 4\Phi''(|\nabla u|^2)^2  g(\nabla_j \nabla u, \nabla u) \nabla_i u \nabla_iu \nabla_j u \\
-2 \Phi'' (|\nabla u|^2) \Phi' (|\nabla u|^2) g(\nabla_i u \nabla u, \nabla u) \nabla_i u\\
 = 2 \Phi'' (|\nabla u|^2) \Delta u |\nabla u|^2(2 |\nabla u|^2 \Phi'' (|\nabla u|^2) + \Phi'(|\nabla u|^2) ) \\
 - 2\Phi'' (|\nabla u|^2)g(\nabla_ j \nabla u, \nabla u) \nabla_ju (2 |\nabla u|^2 \Phi'' (|\nabla u|^2) + \Phi'(|\nabla u|^2) )\\
 =   2 \Phi'' (|\nabla u|^2) \Delta u |\nabla u|^2 \Lambda (|\nabla u|^2) 
  -2 \Phi'' (|\nabla u|^2)g(\nabla_ j \nabla u, \nabla u) \nabla_ju  \Lambda (|\nabla u|^2) \\
  =  2 \Phi'' (|\nabla u|^2)\Lambda (|\nabla u|^2) (\Delta u |\nabla u|^2 - g(\nabla_j \nabla u , \nabla u)\nabla_ju )
\end{gathered}
\end{equation}
Therefore,
\begin{equation}\label{derivativeofd}
\nabla_j d_{ij}(\nabla u) \nabla_i u =  \frac{ 2 \Phi'' (|\nabla u|^2) (\Delta u |\nabla u|^2 - g(\nabla_j \nabla u , \nabla u)\nabla_ju )}{\Lambda (|\nabla u|^2)}
\end{equation}
This finishes the proof of the claim.

Using \eqref{derivativeofd} and \eqref{beforeclaim} we get, 
\begin{equation} \label{fulderivativeof d}
\nabla_j (d_{ij}  (\nabla u) \nabla_i u) =\frac{2 \Phi''(|\nabla u|^2) }{\Lambda (|\nabla u|^2)} ( |\nabla u|^2 \Delta u - g(\nabla_i \nabla u, \nabla u)\nabla_i u )+ \frac{f(u)}{\Lambda (|\nabla u|^2)}
\end{equation}
Next we will calculate the term $\nabla_j (2a_{ij} (\nabla u) \nabla_i \nabla_k u )\nabla_k u  $. We will prove the following 
\begin{equation} \label{derivativeofa}
\nabla_j (a_{ij}(\nabla u) \nabla_k \nabla_iu ) \nabla_k u=  \nabla_k (a_{ij} (\nabla u) \nabla_i \nabla_j u )\nabla_ku + \Phi' (|\nabla u|^2) R_{ki}\nabla_iu \nabla_k u 
\end{equation}
For the proof of \eqref{derivativeofa} we are going to compute $\nabla_j (a_{ij}(\nabla u) \nabla_k \nabla_iu ) $ and then use commuting covariant  derivative formula in order of the curvature tensor to appear.\\

Proof of \eqref{derivativeofa}:
\begin{equation}\label{proofofderivativeofaEq}
\begin{gathered}
\nabla_j (a_{ij}(\nabla u) \nabla_k \nabla_i u  )= \nabla_j (2 \Phi'' (|\nabla u|^2) \nabla_i u \nabla_j u \nabla_k \nabla_i u +\Phi' (|\nabla u|^2) \nabla_k \nabla_j u  )\\
= 4 \Phi''' (|\nabla u|^2) g(\nabla_j \nabla u, \nabla u) \nabla_i u \nabla_j u \nabla_k \nabla_i u  +2 \Phi'' (|\nabla u|^2) \nabla_j \nabla_i u \nabla_j u \nabla_k \nabla_i u \\
 +2 \Phi'' (|\nabla u|^2) \nabla_i u \nabla_j \nabla_j u \nabla_k \nabla_i u  +2 \Phi'' (|\nabla u|^2) \nabla_i u \nabla_j u \nabla_j \nabla_k \nabla_i u\\
 +2 \Phi'' (|\nabla u|^2) \nabla_i \nabla_j u \nabla_j u \nabla_k \nabla_i u  + \Phi' (|\nabla u|^2) \nabla_j \nabla_k \nabla_j u \\
= 4 \Phi''' (|\nabla u|^2) g(\nabla_j \nabla u, \nabla u) \nabla_i u \nabla_j u \nabla_k \nabla_i u   +2 \Phi'' (|\nabla u|^2) \nabla_j \nabla_i u \nabla_j u \nabla_k \nabla_i u 
\\ +  2 \Phi'' (|\nabla u|^2) \nabla_i \nabla_j u \nabla_j u \nabla_k \nabla_i u  +2 \Phi'' (|\nabla u|^2) \nabla_iu \nabla_ju \nabla_k \nabla_j \nabla_i u - 2 \Phi'' \nabla_i u\nabla_j u R_{jkip}\nabla_p u \\ + 2 \Phi'' (|\nabla u|^2) \nabla_i u \nabla_j \nabla_j u \nabla_k \nabla_i u + \Phi' (|\nabla u|^2)\nabla_k \nabla_j \nabla_j u + \Phi' (|\nabla u|^2)R_{kp}\nabla_p u \\
(\textrm{and interchanging the indices} \;\: i \;\: \textrm{and} \;\: j \;\: \textrm{in the second and third term})
\\
=\nabla_k(2 \Phi''( | \nabla u |^2 ) \nabla_j u \nabla_i u \nabla_j \nabla_i u + \Phi'( | \nabla u |^2) \nabla_j \nabla_j u) + \Phi' (|\nabla u|^2)R_{kp}\nabla_p u
\\ = \nabla_k (a_{ij} (\nabla u) \nabla_j \nabla_i u )+ \Phi' (|\nabla u|^2)R_{kp}\nabla_p u
\end{gathered}
\end{equation}
where $R_{ijkl}$ are the components of the curvature tensor while $R_{ij} = R_{ippj}$ are the components of the Ricci tensor. 
Note that from the skew-symmetry  of the Riemann tensor  the term $- 2 \Phi'' \nabla_i u\nabla_j u R_{jkip}\nabla_p u= 0$ and so we showed \eqref{derivativeofa}. Also by taking the covariant derivative of $a_{ij} (\nabla u)\nabla_i \nabla_j u= f(u)$
along the direction $k$ we get $ \nabla_k (a_{ij} (\nabla u)\nabla_i \nabla_j u) = f'(u) \nabla_ku$. As a result \eqref{derivativeofa} becomes
\begin{equation} \label{fulderivativeofa}
\nabla_j (a_{ij}(\nabla u) \nabla_k \nabla_iu ) \nabla_k u= f'(u) |\nabla  u|^2 + \Phi' (|\nabla u|^2) R_{ki}\nabla_iu \nabla_k u 
\end{equation}
Combining \eqref{covariantderivativedP}, \eqref{fulderivativeof d} and \eqref{fulderivativeofa} we obtain
\begin{equation}\label{ContinuousRightHandSide}
\begin{gathered}
\nabla_j (d_{ij} (\nabla u) \nabla_i P )= 2 a_{ij}(\nabla u)\nabla_i \nabla_k u \nabla_j \nabla_k u + 2 \Phi' (|\nabla u|^2)R_{ij}\nabla_iu \nabla_ju \\
- \frac{4 \Phi''(| \nabla u |^2) f(u)}{\Lambda(| \nabla u |^2)} ( | \nabla u |^2 \Delta u - g( \nabla_i \nabla u, \nabla u) \nabla_i u ) -2 \frac{f^2 (u)}{\Lambda (|\nabla u|^2)}
\end{gathered}
\end{equation}
Here, the right hand side is a continuous function and so is the term $ \nabla_j (d_{ij} (\nabla u) \nabla_i P ) $. To verify this, recall that $ u \in C^{2, \alpha} ( \Omega) \;,\: \Phi \in C^3 ( \mathbb{R}) \;,\: F \in C^2 ( \mathbb{R}) $ and $ R_{ij} $ is smooth.

It follows directly that from \eqref{GeneralQuasi-LinearEquation} and \eqref{Defa_ij=..},
\begin{equation}\label{Laplacianterm}
\Delta u = \frac{f(u)}{\Phi' (|\nabla u|^2)} - \frac{ 2 \Phi'' (|\nabla u|^2)}{\Phi' (|\nabla u|^2)}\nabla_i u \nabla_j u \nabla_i \nabla_j u  
\end{equation}
so by \eqref{derivativeofP} and \eqref{Laplacianterm},
\begin{equation}\label{TermInvolvingNablaP}
 | \nabla u |^2 \Delta u - g( \nabla_i \nabla u, \nabla u) \nabla_i u  = -\frac{1}{2 \Phi' (|\nabla u|^2)} \nabla_i u \nabla_i P
\end{equation}
and thus by \eqref{TermInvolvingNablaP} and \eqref{ContinuousRightHandSide} we get 
\begin{align*}
\nabla_j (d_{ij}(\nabla u) \nabla_i P ) - \frac{2 \Phi'' (|\nabla u|^2) f(u)}{\Lambda (|\nabla u|^2) \Phi' (|\nabla u|^2)} \nabla_i u \nabla_i P= 2 a_{ij}(\nabla u)\nabla_i \nabla_k u\nabla_j \nabla_k u \\ + 2 \Phi' (|\nabla u|^2)R_{ij}\nabla_i u\nabla_j u-2 \frac{f^2(u)}{\Lambda (|\nabla u|^2)} \;\;\;\;\;\;\;\;\;\;\;\;\;\;\;\;\;\;\;\;\;\;\;\;\;\;
\end{align*}
From the Cauchy-Schwarz inequality we have
\begin{align*}
|\nabla u|^2 \nabla_i \nabla_k u \nabla_i \nabla_k u \geq \nabla_i \nabla_k u\nabla_i u\nabla_j \nabla_ku\nabla_j u
\end{align*}
and we apply this to the term $a_{ij}(\nabla u) \nabla_i \nabla_k u \nabla_j \nabla_ku$ so we get
\begin{align*}
a_{ij}(\nabla u) \nabla_i \nabla_k u \nabla_j \nabla_ku \geq  \frac{\Lambda(|\nabla u|^2)}{|\nabla u|^2} \nabla_i \nabla_k u\nabla_i u \nabla_j \nabla_k u\nabla_j u
\end{align*}
Also  directly from \eqref{derivativeofP} we obtain
$$\nabla_i \nabla_k u\nabla_i u\nabla_j \nabla_k u\nabla_j u = \frac{(\nabla_kP +2 f(u)\nabla_k u)(\nabla_kP +2 f(u)\nabla_ku )}{4 | \nabla u |^2 \Lambda (|\nabla u|^2)}$$
This concludes the proof of Lemma 2.1.

\end{proof}
%$ \\ $

\begin{proof}[Proof of Theorem \ref{ThmGradientBoundForQuasi-LinearEq}]
$ \\ $
We now complete the proof of Theorem \ref{ThmGradientBoundForQuasi-LinearEq}. We argue as in \cite{CGS} and \cite{FV} with the appropriate modifications.

Let $ u $ be a solution of \eqref{ThmQuasi-LinearEquation}, by Theorem 1 in \cite{T} (see also Theorem 3.1 in \cite{CGS}) we have that $ u \in C^{1, \alpha} ( \mathcal{M}) $.

Consider the set
\begin{equation}\label{ProofTheorem1.1Eq1}
\mathcal{E} : = \lbrace v \in C^{1, \alpha}( \mathcal{M}) \;\: \textrm{solution of} \;\: \eqref{ThmQuasi-LinearEquation} \;\: \textrm{such that} \;\: || v ||_{C^{1,\alpha}(\mathcal{M})} \leq || u ||_{C^{1,\alpha}(\mathcal{M})} \rbrace 
\end{equation}
Let $ P = P(u;x) $ defined in \eqref{PfunctionForeneralQuasi-LinearEquation} and consider
\begin{equation}\label{ProofTheorem1.1Eq2}
P_0 = \sup \lbrace P(v;x) \: | \; v \in \mathcal{E} \;,\; x \in \mathcal{M} \rbrace
\end{equation}
For proving the bound \eqref{ThmGradientBoundStatementEq} it suffices to prove that
\begin{equation}\label{ProofTheorem1.1Eq3}
P_0 \leq 0
\end{equation}
We argue by contradiction and suppose that
\begin{equation}\label{ProofTheorem1.1Eq4}
P_0 > 0
\end{equation}
So, there exist two sequences $ v_k \in \mathcal{E} $ and $ x_k \in \mathcal{M} $ such that
\begin{equation}\label{ProofTheorem1.1Eq5}
P_0 - \frac{1}{k} \leq P(v_k ; x_k) \leq P_0
\end{equation}
By the compactness of $ \mathcal{M} $, $ x_k $ converges to some $ x_0 \in \mathcal{M} $ up to a subsequence that we still denote as $ x_k $. 

In addition, by the uniform bound
\begin{equation}\label{ProofTheorem1.1Eq6}
|| v_k ||_{C^{1,\alpha}(\mathcal{M})} \leq || u ||_{C^{1,\alpha}(\mathcal{M})}
\end{equation}
and the Ascoli-Arzela theorem for compact manifolds (see for instance \cite{Petersen}), we have that $ v_k $ converges uniformly in $ C^1( \mathcal{M}) $ to some $ v_0 \in \mathcal{E} $, up to a subsequence.

Thus,
\begin{equation}\label{ProofTheorem1.1Eq7}
P_0 = \lim_{k \rightarrow + \infty} P(v_k;x_k) = P(v_0;x_0)
\end{equation}
by \eqref{ProofTheorem1.1Eq5}. 

This gives
\begin{equation}\label{ProofTheorem1.1Eq8}
0< P_0 = 2 \Phi'( | \nabla v_0(x_0) |^2) | \nabla v_0(x_0) |^2 - \Phi (| \nabla v_0(x_0) |^2) -2 F(v_0(x_0))
\end{equation}
and since $ F \geq 0 $ and $ \Phi(0) =0 $, it holds that
\begin{equation}\label{ProofTheorem1.1Eq9}
\nabla v_0 (x_0) \neq 0
\end{equation}
Now, consider the closed set
\begin{equation}\label{ProofTheorem1.1Eq9'}
\mathcal{N} = \lbrace x \in \mathcal{M} \: : \: P(v_0 ;x) = P_0 \rbrace
\end{equation}
The set $ \mathcal{N} $ is non empty since $ x_0 \in \mathcal{N} $ and is also open. Indeed, let $ y_1 \in \mathcal{N} $,
from \eqref{ProofTheorem1.1Eq9}, it holds that $ \inf_{\overline{B}_\delta (y_1)} | \nabla v_0 | >0 $ for some $ \delta >0 $, where $ B_\delta (y_1) $ is a ball with center $ y_1 $ and radius $ \delta $. So, utilizing Corollary \ref{CorMaximumPrincipleforP}, we obtain
\begin{equation}\label{ProofTheorem1.1Eq10}
P(v_0;x) = P_0 \;\;\;\; \textrm{for all} \;\: x \in B_\delta (y_1)
\end{equation}
which imply $ B_\delta (y_1) \subset \mathcal{N} $. By connectedness it follows
\begin{equation}\label{ProofTheorem1.1Eq10'}
P(v_0;x) = P_0 \;\;\;\; \textrm{for all} \;\: x \in \mathcal{M}.
\end{equation}

On the other hand, since $ \mathcal{M} $ is compact and $ v_0 \in C^1(\mathcal{M}) $ there exists $ y_0 \in \mathcal{M} $ in which $ v_0 $ attains it's minimum, and thus
\begin{equation}\label{ProofTheorem1.1Eq11}
\nabla v_0(y_0) = 0
\end{equation}
but then
\begin{equation}\label{ProofTheorem1.1Eq12}
P_0 = P(v_0;y_0) = -2 F(v_0(y_0)) \leq 0
\end{equation}
and contradicts \eqref{ProofTheorem1.1Eq4}. 

Therefore $ P_0 \leq 0 $ and we conclude.
\end{proof}

%$ \\ $

\section{Proof of Theorems \ref{ThmLiouvilleForQuasiLinear} and \ref{ThmLiouvilletype2}}

In this section we will prove the Liouville-type theorems. We begin with an appropriate elliptic inequality. However, in this case, this inequality is satisfied by the quantity $ P = | \nabla u|^2 . $

\begin{lemma}\label{PfunctionEllipticEstimateForLiouville}
Let $ u \in C^3(\mathcal{M}) $ be a solution of \eqref{GeneralQuasi-LinearEquation}.

Then either
\begin{equation} \label{ellipticinequalityforP=grad2u}
\begin{gathered}
\nabla_j (a_{ij} \nabla_i P ) \geq 2 F'' (u) |\nabla u|^2 + 2 c_1 (a+ | \nabla u |)^{p-2} |Hes \: u|^2 \\ + 2 \Phi' ( |\nabla u|^2) R_{ik} \nabla_i u \nabla_k u
\end{gathered}
\end{equation}
if assumption (A) holds,

or
\begin{equation}
\label{ellipticinequalityforP=grad2u2}
\nabla_j (a_{ij} \nabla_i P ) \geq 2 F'' (u) |\nabla u|^2 + 2 c_1 \frac{ |Hes \: u|^2}{1 + | \nabla u |} + 2 \Phi' ( |\nabla u|^2) R_{ik} \nabla_i u \nabla_k u
\end{equation}
if assumption (B) holds, where $ P = | \nabla u|^2 $.
\end{lemma}
$ \\ $
\textbf{Note:} Note that for proving Theorem \ref{ThmLiouvilleForQuasiLinear} we utilize Lemma \ref{PfunctionEllipticEstimateForLiouville} in the points where $ | \nabla u | $ is bounded away from zero (see Corollary \ref{CorMaximumPrincipleforP}).

%$ \\ $

\begin{proof}
We have the equation $div( \Phi' (|\nabla u|^2) \nabla u) = F' (u)$ which can be written as $a_{ij} (\nabla u) \nabla_i \nabla_j u = F' (u)$. Here

$$ a_{ij} (\sigma) = 2 \Phi'' (|\sigma|^2) \nabla_i \sigma \nabla_j \sigma + \Phi' (|\sigma|^2) \delta_{ij} $$ 

Denote by $P = |\nabla u|^2$ then
\begin{align*}
\nabla_i P = & \nabla_i (g(\nabla u, \nabla u)) =  2 g(\nabla_i \nabla u, \nabla u) 
=  2\nabla_i \nabla_k u\nabla_k u
\end{align*}
and
\begin{equation}\label{ProofofLiouvilleSecondDerivativeofP}
\begin{gathered}
\nabla_j \nabla_i P = \nabla_j (2 \nabla_i u \nabla_k u \nabla_k u ) \\
 = 2 \nabla_j \nabla_i \nabla_k u \nabla_k u + 2 \nabla_i \nabla_k u \nabla_j \nabla_k u 
\end{gathered}
\end{equation}

We claim that 
\begin{align*}
\nabla_j (a_{ij}(\nabla u) \nabla_k \nabla_i u  )- \nabla_k (a_{ij} (\nabla u)\nabla_i \nabla_j u )  =  \Phi' (|\nabla u|^2)R_{ik}\nabla_k u
\end{align*}
Proof of the claim 
\begin{equation}\label{PfClaim1}
\begin{gathered}
\nabla_j (a_{ij} (\nabla u)\nabla_k \nabla_i u )=  \nabla_j (2 \Phi'' (|\nabla u|^2) \nabla_i u \nabla_j u\nabla_k \nabla_i u + \Phi' (|\nabla u|^2) \nabla_k  \nabla_j u ) \\
=  4 \Phi''' (|\nabla u|^2) \nabla_j \nabla_l u \nabla_l u \nabla_i u \nabla_j u \nabla_k \nabla_i u + 2 \Phi'' ( |\nabla u|^2) \nabla_j \nabla_i u\nabla_j u \nabla_k \nabla_i u \\
+ 2 \Phi'' (|\nabla u|^2) \nabla_i u \nabla_j \nabla_j u \nabla_k \nabla_i u + 2 \Phi'' (|\nabla u|^2) \nabla_i u \nabla_j u \nabla_j \nabla_k \nabla_i u \\
 + 2 \Phi'' (|\nabla u|^2) \nabla_j \nabla_l u \nabla_lu \nabla_k \nabla_j u + \Phi' (|\nabla u|^2) \nabla_j \nabla_k \nabla_j u \\
= 4 \Phi''' (|\nabla u|^2) \nabla_j \nabla_l u \nabla_l u \nabla_i u \nabla_j u \nabla_k \nabla_i u + 2 \Phi'' ( |\nabla u|^2) \nabla_j \nabla_i u\nabla_j u \nabla_k \nabla_i u\\
+ 2 \Phi'' (|\nabla u|^2) \nabla_i u \nabla_j \nabla_j u \nabla_k \nabla_i u + 2 \Phi'' (|\nabla u|^2) \nabla_i u \nabla_j u \nabla_k \nabla_j \nabla_i u + 2 \Phi'' (|\nabla u|^2) \nabla_iu \nabla_j u R_{jkil} \nabla_l u \\
 + 2 \Phi'' (|\nabla u|^2) \nabla_j \nabla_l u \nabla_lu \nabla_k \nabla_j u + \Phi' (|\nabla u|^2) \nabla_k \nabla_j \nabla_j u + \Phi' (|\nabla u|^2) R_{jk} \nabla_j u \\
=  \nabla_k (a_{ij} (\nabla u)\nabla_i \nabla_j u ) + \Phi' (|\nabla u|^2) R_{ik} \nabla_i u
\end{gathered}
\end{equation}
where again $R_{ijkl}$ are the components of the Riemann curvature tensor and $R_{ij}$ are the components of the Ricci tensor.

We will now show that \eqref{ellipticinequalityforP=grad2u} holds. We utilize that $$a_{ij} \nabla_j \nabla_i P = 2a_{ij}(\nabla u)\nabla_j \nabla_i \nabla_k u \nabla_k u + 2a_{ij} (\nabla u)\nabla_i \nabla_k u \nabla_j \nabla_k u $$
So,
\begin{align*}
\nabla_j (a_{ij} \nabla_i P )= & 2 \nabla_j (a_{ij} (\nabla u) \nabla_i \nabla_k u ) \nabla_k u +2 a_{ij} (\nabla u)\nabla_i \nabla_k u \nabla_j \nabla_k u\\
=& 2 \nabla_k (a_{ij} (\nabla u)\nabla_i \nabla_j u ) \nabla_k u + 2 \Phi' ( |\nabla u|^2) R_{ik} \nabla_i u \nabla_k u+2 a_{ij} (\nabla u)\nabla_i \nabla_k u \nabla_j \nabla_k u\\
=& 2 F'' (u) \nabla_k u \nabla_k u + 2 a_{ij}\nabla_i \nabla_k u \nabla_j \nabla_k u + 2 \Phi' ( |\nabla u|^2) R_{ik} \nabla_i u \nabla_k u \\
= & 2 F'' (u) |\nabla u|^2 + 2 a_{ij}\nabla_i \nabla_k u \nabla_j \nabla_k u + 2 \Phi' ( |\nabla u|^2) R_{ik} \nabla_i u \nabla_k u \\
\end{align*}

At this point, we first consider the case where assumption (A) is satisfied. In this case, we have 
%\begin{equation}\label{ProofLemEllipticityCondAss(A)}
%a_{ij}(\nabla u)\nabla_i \nabla_k u \nabla_j \nabla_k u \geq c_1 (a+ | \nabla u |)^{p-2} |Hes \: u|^2
%\end{equation}
%and we get
\begin{equation}\label{ProofLemEllipticIneqAss(A)}
\begin{gathered}
\nabla_j (a_{ij} \nabla_i P ) \geq 2 F'' (u) |\nabla u|^2 + 2 c_1 (a+ | \nabla u |)^{p-2} |Hes \: u|^2 \\ + 2 \Phi' ( |\nabla u|^2) R_{ik} \nabla_i u \nabla_k u
\end{gathered}
\end{equation}

Now, if assumption (B) holds, set $ \xi'= ( \xi, \xi_{n+1}) $ where $ \xi_i = \nabla_i \nabla_k u $ and $ \xi_{n+1} = \xi \cdot \nabla u $, so we 
%have
%\begin{equation}\label{ProofLemEllipticityCondAss(B)}
%a_{ij}(\nabla u)\nabla_i \nabla_k u \nabla_j \nabla_k u \geq c_1 \frac{ |Hes \: u|^2    + | \nabla_i \nabla_k u \cdot \nabla_i u|^2 }{1 + | \nabla u |} \geq c_1 \frac{ |Hes \: u|^2 }{1 + | \nabla u |}
%\end{equation}
%
%
%thus we
similarly obtain
\begin{equation}
\label{ProofLemEllipticIneqAss(B)}
\begin{gathered}
\nabla_j (a_{ij} \nabla_i P ) \geq 2 F'' (u) |\nabla u|^2 + 2 c_1 \frac{ |Hes \: u|^2}{1 + | \nabla u |} \\ + 2 \Phi' ( |\nabla u|^2) R_{ik} \nabla_i u \nabla_k u
\end{gathered}
\end{equation}
\end{proof}
%$ \\ $

\begin{proof}[Proof of Theorem \ref{ThmLiouvilleForQuasiLinear}]
To complete the proof of Theorem \ref{ThmLiouvilleForQuasiLinear}, we argue as in the proof of Theorem \ref{ThmGradientBoundForQuasi-LinearEq} in the previous section. Observe that Corollary \ref{CorMaximumPrincipleforP} can also be applied in this case for $ P = | \nabla u |^2 $, utilizing Lemma \ref{PfunctionEllipticEstimateForLiouville} (in a ball such that $ | \nabla u | $ is bounded away from zero) and the Strong Maximum Principle (see Theorem 8.19 in \cite{GT}). Therefore $ P \leq 0 $ and we conclude.
\end{proof}
%$ \\ $

Now we proceed to the proof of Theorem \ref{ThmLiouvilletype2}. This result is the analog of Theorem 1.8 in \cite{CGS} for compact manifolds (see also Theorem 21 in \cite{AX}) and is independent of Lemma \ref{PfunctionEllipticEstimateForLiouville}. 

\begin{proof}[Proof of Theorem \ref{ThmLiouvilletype2}]
For the proof of Theorem \ref{ThmLiouvilletype2} we argue as in the proof of Theorem 1.8 in \cite{CGS}, utilizing the gradient bound obtained in the Theorem \ref{ThmGradientBoundForQuasi-LinearEq}. The only difference is that we define $$ \phi (t) = u(x_1 + t \: exp(v)) -u (x_0) $$ where $ exp : T_{x_1} \mathcal{M} \rightarrow \mathcal{M} $ is the exponential map and similarly conclude that $ \phi $ is identically zero for every $ x_1 \in \mathcal{M} $.
\end{proof}

%A further goal might be to prove splitting theorems when the inequality \eqref{ThmGradientBoundStatementEq} becomes equality at a point $ x_0 \in \mathcal{M} $ as in \cite{FO} or Liouville type properties.

%$ \\ $

\section{Proof of Theorems \ref{ThmHarnackForGradient} and \ref{ThmABPestimateforGradient}}

In this section, arguing similarly to section 5 in \cite{DG}, we will prove the gradient estimates, that is a Harnack-type inequality and an ABP estimate for the gradient of solutions.
For proving Theorem \ref{ThmHarnackForGradient}, in the case where the ellipticity condition in Assumption (A) is satisfied, we further assume that $ a>0 $ when $ p \neq 2 $. We could drop this assumption by considering a balls $ B_R $ such that $ \nabla u $ does not vanish. 
%Note that when assumption (A) holds, by Theorem 1 in \cite{T}, we can bound the gradient of $ u $ by a constant that depends on $ || u||_{L^\infty(\mathcal{M})} . $
$ \\ $

\begin{proof}[Proof of Theorem \ref{ThmHarnackForGradient}]
We begin as follows. First we obtain an elliptic inequality for the quantity $ P= | \nabla u|^2 $ similar to that of Lemma \ref{PfunctionEllipticEstimateForLiouville} for assumption (A). In particular, arguing as in the proof of Lemma \ref{PfunctionEllipticEstimateForLiouville} and since $ F'' \leq 0 $, we have
\begin{equation}\label{ProofHarnackEq1}
\sum_{i,j} \nabla_j (a_{ij}(\nabla u) \nabla_i P)\leq 2 c_2 (a+ | \nabla u |)^{p-2} |Hes \: u|^2 + 2 \Phi'(| \nabla u |^2 ) Ric(\nabla u, \nabla u)
\end{equation}
where $ P= | \nabla u|^2 $.

At this point, the compactness of $ \mathcal{M} $ gives $ | \nabla u | \leq M $, and by \eqref{AssumptionAona_ij2} we observe that
\begin{equation}\label{ProofHarnackNewEllipticityfora_ijAs(A)}
\begin{gathered}
\tilde{c}_1 | \xi |^2 \leq a_{ij} \xi_i \xi_j \leq \tilde{c}_2 | \xi |^2 \\ \textrm{where} \;\: \tilde{c}_1 \;, \: \tilde{c}_2 \;\: \textrm{depend on} \:\; c_1 \;, c_2 \;,  a \;\: \textrm{and} \;\: M,
\end{gathered}
\end{equation}
since $ a>0 $ for $ p \neq 2 $ (and $ \sigma = \nabla u $). In particular, $ \tilde{c}_1 \;, \tilde{c}_2 $ can be written as
\begin{equation}\label{ProofHarnackEqNewConstants}
\begin{gathered} \tilde{c}_1 = c_1 a^{p-2} \;\; \textrm{and} \;\: \tilde{c}_2 = c_2 ( a + M)^{p-2} \;\;,\; \textrm{if} \;\: p >2 \\ \tilde{c}_1 = c_1 ( a + M)^{p-2} \;\; \textrm{and} \;\: \tilde{c}_2 = c_2 a^{p-2} \;\;,\; \textrm{if} \;\: p <2 \\
\tilde{c}_1 = c_1 \;\; \textrm{and} \;\: \tilde{c}_2 =c_2 \;\;,\: \textrm{if} \;\: p =2.
\end{gathered}
\end{equation}
%Note that in assumption (A), by Theorem 1 in \cite{T} we can bound $ | \nabla u | $ by a constant $ C_0 $ that depends on $ || u||_{L^{\infty}( \mathcal{M})} $, so the constants $ \tilde{c}_1 \;,\: \tilde{c}_2 $ will depend on $ C_0 $.

So, utilizing the Harnack inequality on manifolds for $ P = | \nabla u|^2 $, in particular Theorem 8.1 in \cite{Cabre}, we conclude.
%that
%\begin{equation}\label{ProofHarnackEqStatement1}
%\begin{gathered}
%\frac{1}{| B_R |^{1/q}} (\int_{B_R} | \nabla u |^{2q} d \mu_g)^{1/q} \leq C ( \inf_{B_R} | \nabla u |^2 + \frac{R^2}{| B_{2R} |^{1/n}} || \: | Hes \: u |^2 \: ||_{L^{n}(B_{2R})} \\ + \frac{R^2}{| B_{2R} |^{1/n}} || \Phi'(| \nabla u |^2 ) Ric(\nabla u, \nabla u) ) ||_{L^{n}(B_{2R})})
%\end{gathered}
%\end{equation}
%where $ q>0 $ and $ C $ depends on $ n \;, \tilde{c}_1 $ and $ \tilde{c}_2 $.
The compactness of $ \mathcal{M} $ and the smoothness of $ u $ guarantee the boundedness of the Hessian of $ | \nabla u|^2 $. $ \\ $

In the case where assumption (B) holds, we have the following ellipticity condition for $ a_{ij} $
\begin{equation}\label{ProofHarnackNewEllipticityfora_ijAs(B)}
\begin{gathered}
c_1 | \xi |^2  \leq a_{ij} \xi_i \xi_j \leq c_2 | \xi' |^2 \leq c_2 (1 + M^2 ) | \xi |^2 \\ \textrm{where} \;\:  \xi'=( \xi, \xi_{n+1}) \;\: \textrm{and} \;\: \xi_{n+1} = \xi \cdot \nabla u
\end{gathered}
\end{equation}
since $ \xi' \perp ( - \nabla u,1) $ and $ | \nabla u | \leq M $. In this case the ellipticity constants $ \tilde{c}_1 $ and $ \tilde{c}_2 $ will depend on $ \mathcal{M} $ and are defined as
\begin{equation}\label{ProofHarnackEqNewConstantsAs(B)}
\tilde{c}_1 = c_1 \;\: \textrm{and}  \:\; \tilde{c}_2 = c_2 (1 + M^2 )
\end{equation}

Therefore by \eqref{ProofHarnackEq1}
\begin{equation}\label{ProofHarnackEq4}
\begin{gathered}
\nabla_j (a_{ij} \nabla_i P ) \leq \tilde{c}_2  |Hes \: u|^2 + 2 \Phi' ( |\nabla u|^2) R_{ik} \nabla_i u \nabla_k u
\end{gathered}
\end{equation}
where $ \tilde{c}_2 = 2 c_2 (1 + M) $ and we apply Theorem 8.1 in \cite{Cabre} to conclude.
%Finally, if $ \mathcal{M} $ has non positive Ricci curvature, we similarly obtain \eqref{thmHarnackInequalityStatement2}.
\end{proof}
%$ \\ $

\begin{proof}[Proof of Theorem \ref{ThmABPestimateforGradient}]
The proof of Theorem \ref{ThmABPestimateforGradient} is established with similar arguments as in the proof of Theorem \ref{ThmHarnackForGradient}. Particularly, by utilizing Lemma \ref{PfunctionEllipticEstimateForLiouville}, in both cases where either assumption (A) or (B) is satisfied, we have
\begin{equation}\label{ProofThm1.5Eq1}
\nabla_j (a_{ij} \nabla_i P)  \geq 2 F''(u) | \nabla u |^2
\end{equation}
since the nonnegativity of the sectional curvature implies that the Ricci curvature is nonnegative.
So, by Theorem 2.3 in \cite{Cabre} and \eqref{ProofThm1.5Eq1} we conclude.
\end{proof}

%$ \\ $

\section{A Local Splitting Theorem}

In this last section we will prove a local splitting theorem of the manifold in a neighborhood of $ x_0 $ together with a precise description of the solution in this neighborhood. The intuition behind these types of results when dealing with $ \mathbb{R}^n $, arises from the fact that if the equipartition of the energy of the Allen-Cahn functional $ \int \frac{1}{2}| \nabla u |^2 + F(u) $ holds true at a single point (i.e. $ \frac{1}{2} | \nabla u |^2 = F(u) $), then the solutions are one dimensional. This has been generalized in \cite{FO} for complete manifolds where local and global splitting theorems are proved when the equipartition of the energy holds at some point. We will prove the local splitting analog for Quasi-linear equations and we point out the main obstruction for extending the global splitting theorem in our case in Remark \ref{RmkNoncompactCase}.
%$ \\ $

The local splitting theorem is the following

\begin{theorem}\label{LocalSplittingTheorem}
Let $ u \in C^3 ( \mathcal{M}) $ and assume that equality is achieved in \eqref{ThmGradientBoundStatementEq} at a regular point $ x_0 $, i.e. $ \nabla u(x_0) \neq 0 $. Then, $ \\ $
(i) equality in \eqref{ThmGradientBoundStatementEq} holds in the connected component of $ \mathcal{M} \cap \lbrace \nabla u \neq 0 \rbrace $ that contains $ x_0 . \\ $
(ii) $ Ric( \nabla u, \nabla u ) $ vanishes at the connected component of $ \mathcal{M} \cap \lbrace \nabla u \neq 0 \rbrace $ that contains $ x_0 . \\ $
(iii) there is a neighborhood of $ U_{x_0} \subset \mathcal{M} $ of $ x_0 $, that splits as the Riemannian product $ \mathcal{N} \times I $ where $ \mathcal{N} \subset \mathcal{M} $ is a totally geodesic and isoparametric hypersurface with $ Ric( \mathcal{N}) \geq 0 $ and $ I \subset \mathbb{R} $ is an interval, $ \\ $
(iv) the solution $ u $ restricted to the neighborhood $ U_{x_0} $, is equal to $ u(p,s) = \phi(s) $ where $ \phi $ is a bounded and strictly monotone solution of $ \phi'' = \dfrac{F'( \phi)}{\Lambda( (\phi')^2)} $ and $ \Lambda (t) = 2 t \Phi''(t) + \Phi'(t) $.
\end{theorem}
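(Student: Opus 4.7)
The plan is to derive everything from the strong maximum principle applied to Lemma 2.1 on the connected component containing $x_0$, combined with a geometric analysis of the rank-one Hessian that the equality cases impose.

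First, let $\Omega \subset \mathcal{M}$ denote the connected component of $\{\nabla u \neq 0\}$ containing $x_0$. On $\Omega$ the coefficients in the inequality of Lemma 2.1 are smooth and uniformly elliptic on compact subsets, so $P$ satisfies a linear second-order inequality of sub-solution type. Since $P(x_0) = 0$, $P \leq 0$ everywhere by Theorem 1.1, and $P$ attains its maximum at the interior point $x_0 \in \Omega$, the strong maximum principle forces $P \equiv 0$ on $\Omega$, establishing (i). Substituting $\nabla P = 0$ into the elliptic inequality, the left-hand side vanishes while the right-hand side is a sum of non-negative quantities that must therefore all vanish; this yields $\Phi'(|\nabla u|^2)\,Ric(\nabla u,\nabla u) = 0$, and since $\Phi'>0$ we obtain (ii). Tracing the Cauchy-Schwarz step in the proof of Lemma 2.1 under the constraints $P \equiv 0$ and $Ric(\nabla u,\nabla u) = 0$ forces equality in Cauchy-Schwarz, so $\nabla \nabla_k u$ is parallel to $\nabla u$ for each $k$. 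Using symmetry of the Hessian, this upgrades to the rank-one identity
\[
\nabla_i \nabla_j u \;=\; \lambda(x)\,\nabla_i u\,\nabla_j u \qquad \text{on } \Omega,
\]
where $\lambda$ is determined by inserting the identity into the equation:
\[
\lambda(x) \;=\; \frac{F'(u(x))}{|\nabla u(x)|^2\,\Lambda(|\nabla u(x)|^2)}.
\]

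Second, set $e_1 = \nabla u / |\nabla u|$ on $\Omega$. A direct computation using the rank-one formula together with the consequence $\nabla_j |\nabla u| = \lambda |\nabla u|\,\nabla_j u$ shows that $\nabla_X e_1 = 0$ for every vector field $X$ on $\Omega$, so $e_1$ is a parallel unit vector field. In particular its integral curves are geodesics and the level hypersurfaces of $u$ are totally geodesic. Applying the local de Rham decomposition in a small neighborhood $U_{x_0}$ gives an isometry $U_{x_0} \simeq \mathcal{N} \times I$, where $\mathcal{N}$ is an open piece of the level hypersurface $\{u = u(x_0)\}$ and $s \in I$ is arclength along the flow of $e_1$. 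Because $P \equiv 0$ on $\Omega$, the quantity $|\nabla u|^2$ is a function of $u$ alone, so the level sets of $u$ in $U_{x_0}$ form a parallel family on which $|\nabla u|$ is constant, which is the isoparametric condition. The product structure combined with $Ric(\mathcal{M}) \geq 0$ gives $Ric(\mathcal{N}) \geq 0$, completing (iii).

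Third, in the product coordinates $\nabla u$ is a multiple of $\partial_s$, hence $u$ is constant on every slice $\mathcal{N} \times \{s\}$ and we may write $u(p,s) = \phi(s)$. Boundedness of $\phi$ is inherited from boundedness of $u$ on the compact manifold, while strict monotonicity follows from $\phi'(s) = |\nabla u|(\gamma(s)) > 0$ throughout $I$. The ODE is obtained by substitution: along the geodesic $\gamma$ the rank-one formula gives $\phi''(s) = \lambda(\gamma(s))\,(\phi'(s))^2$, and combining this with $\lambda\,|\nabla u|^2\,\Lambda(|\nabla u|^2) = F'(u)$ yields $\phi''\,\Lambda((\phi')^2) = F'(\phi)$, which is (iv). The main obstacle is the geometric rigidity step: passing from the pointwise algebraic identity for the Hessian to a genuine local splitting of the manifold. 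Once the rank-one identity is established on all of $\Omega$, the parallelism of $e_1$ and the appeal to local de Rham are essentially mechanical, but some care is required in running the strong maximum principle cleanly on $\Omega$ given the degeneracy of ellipticity along $\partial \Omega$, and in verifying that each inequality used to derive Lemma 2.1 reduces to equality when $P \equiv 0$ so as to extract both the Ricci vanishing and the Hessian rank-one structure simultaneously.
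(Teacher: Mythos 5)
Your proof is correct, and it takes a genuinely different route from the paper's for the geometric part (iii)--(iv). The paper, following \cite{CGS} (their Theorem~5.1), introduces the auxiliary function $w = Q(u) = \int_{u_0}^u G(s)^{-1/2}\,ds$ with $G = \Psi^{-1}(2F)$ and shows that $P\equiv 0$ forces $|\nabla w|\equiv 1$ and $\Delta w = 0$; the vanishing of $Ric(\nabla u,\nabla u)$ plus the Bochner formula then makes $\nabla w$ a parallel unit vector field, at which point the local splitting is invoked from \cite{Petersen} / \cite{FO}. You bypass the auxiliary function entirely: by tracking the equality case of the Cauchy--Schwarz step inside Lemma~2.1 once $P\equiv 0$, you extract the rank-one Hessian identity $\nabla_i\nabla_j u = \lambda\,\nabla_i u\,\nabla_j u$ with $\lambda = F'(u)/(|\nabla u|^2\Lambda)$, and then show directly that $e_1 = \nabla u/|\nabla u|$ is parallel, feeding it into the local de~Rham decomposition. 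Both arguments reduce to producing a parallel unit vector field aligned with $\nabla u$; yours is more self-contained and reads off the geometric rigidity straight from the differential inequality, at the cost of having to verify by hand the rank-one structure and the parallelism computation, whereas the paper outsources this to the change of variable $w = Q(u)$ and a citation. Your derivation of the ODE in (iv) via $\phi'' = \lambda(\phi')^2$ and $\lambda|\nabla u|^2\Lambda = F'(u)$ is also correct and matches the paper's stated ODE $\phi'' = F'(\phi)/\Lambda((\phi')^2)$.
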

%$ \\ $

For the proof of Theorem \ref{LocalSplittingTheorem} we utilize the techniques in \cite{FO} with some modifications. For the convenience of the reader we provide the details. 
\begin{proof}
Assume that $ u $ is a non constant solution of \eqref{GeneralQuasi-LinearEquation} and set
\begin{align*}
P(u;x) = 2 \Phi'( | \nabla u(x) |^2) | \nabla u(x) |^2 - \Phi (| \nabla u(x) |^2) -2 F(u(x)) \;\;\;,\; x \in \mathcal{M}.
\end{align*}
By Lemma \ref{PfunctionEllipticEstimateForGrBound} the following inequality holds
\begin{align}\label{ProofLocalSplittingEq1}
|\nabla u|^2 \nabla_j (d_{ij}(\nabla u) \nabla_i P  ) + B_i \nabla_i P \geq \frac{|\nabla P |^2}{2 \Lambda (|\nabla u|^2)}+2 | \nabla u |^2  \Phi' (|\nabla u |^2)Ric(\nabla u, \nabla u)
\end{align}
Theorem \ref{ThmGradientBoundForQuasi-LinearEq} gives that $ P \leq 0 $ on $ \mathcal{M} $
and therefore by \eqref{ProofLocalSplittingEq1} and the strong maximum principle it holds that
\begin{align*}
P(u;x) = P(u;x_0) =0 \;\;\;\;\;\;\;\;\;\;\;\;\;\;\;\;\;\;\;\;\;\;\;\;\;\;\;\;\;\;\;\;\; \\ \textrm{in the connected component of} \;\: \mathcal{M} \cap \lbrace \nabla u \neq 0 \rbrace \;\: \textrm{that contains} \:\; x_0.
\end{align*}
since $ | \nabla u(x_0) | >0 $ by assumption.

Also, from \eqref{ProofLocalSplittingEq1} we have that $ Ric (\nabla u, \nabla u) =0 $ in the connected component of $ \mathcal{M} \cap \lbrace \nabla_g u \neq 0 \rbrace \;\: $ that contains $ x_0$.

We now proceed to the remaining statements of Theorem \ref{LocalSplittingTheorem}. Define $$ w = Q (u) = \int_{u_0}^{u} G(s)^{- \frac{1}{2}}ds $$ where $ G(s) = \Psi^{-1}(2F(s)) $. 

A direct calculation gives $|\nabla w| =1 $ and $\Delta w =0$ (see also Theorem 5.1 in \cite{CGS}). That is the function $w$ is harmonic and has constant gradient of length one, so it generates a parallel vector field. From local splitting theorems (see \cite{Petersen} or section 3 in \cite{FO}) we conclude.

The one dimensionality and monotonicity of the solutions restricted to this neighborhood is straightforward.

\end{proof}

\begin{remark}\label{RmkNoncompactCase}
Note that the extension of the above result to a global splitting theorem for compact manifolds would require an analogous result to that of Cheeger-Gromoll splitting theorem for complete noncompact manifolds. However such result is not known in general and so, it will possibly be a motivation for future research.
\end{remark}
%$ \\ $

\textbf{Acknowledgements}: We would like to thank Professor A. Farina for sharing his work in personal communication with D.G. and thus motivated a part of this paper.
Also, we would like to thank the anonymous referee for the valuable suggestions that improved the presentation and the content of the paper.
The first author acknowledges the ``Basic research Financing'' under the National Recovery and Resilience Plan ``Greece 2.0'' funded by the European Union-NextGeneration EU (H.F.R.I. Project Number: 016097).

%$ \\ $

\end{document}